
\documentclass[referee,envcountsect,]{svjour3}
\smartqed
\usepackage{graphicx}
\usepackage{xcolor}


\usepackage{amsmath,amsfonts,amssymb}
\usepackage{tikz}
\usepackage{mdframed}
\newmdenv[linecolor=black,skipabove=\topsep,skipbelow=\topsep,
leftmargin=-5pt,rightmargin=-5pt,
innerleftmargin=5pt,innerrightmargin=5pt]{mybox}


\begin{document}
	
	\title{Discontinuous Strongly Quasiconvex Functions}
	
	
	\author{Nguyen Thi Van Hang\and Felipe Lara\and \\Nguyen Dong Yen}
	
	\institute{Nguyen Thi Van Hang \at
		Institute of Mathematics, Vietnam Academy of Science and Technology, 18 Hoang Quoc Viet, Hanoi 10072, Vietnam\\ ntvhang@math.ac.vn
		\and
		Felipe Lara \at
		Instituto de Alta investigación (IAI), Universidad de Tarapacá, Arica, Chile\\felipelaraobreque@gmail.com; flarao@academicos.uta.cl\\ ORCID-ID: 0000-
		0002-9965-092
		\and
		Nguyen Dong Yen (Corresponding author)\at Institute of Mathematics, Vietnam Academy of Science and Technology, 18 Hoang Quoc Viet,
		Hanoi 10072, Vietnam\\
		ndyen@math.ac.vn}
	
	\date{Received: date / Accepted: date}
	
	\maketitle
	
	\begin{abstract} A fundamental open question asking whether all real-valued strongly quasiconvex functions defined on $\mathbb R^n$ are necessarily continuous, akin to their convex counterparts, is answered in detail in this paper. Among other things, we show that such functions can have infinitely many points of discontinuity. The failure of lower semicontinuity together with the lack of upper semicontinuity at infinitely many points of certain real-valued strongly quasiconvex functions are also shown. 
	\end{abstract}
	
	\medskip
	
	\keywords{Quasiconvex function . Strongly quasiconvex function . Continuity . Lower semicontinuity . Upper semicontinuity . Discontinuous strongly quasiconvex function}

 \section{Introduction}\label{Sect_1}
 
 An extended-real-valued function defined on a topological vector space is said to be \textit{quasiconvex} if its sublevel sets are convex. Being of great theoretical importance, quasiconvex functions form an interesting class of generalized convex functions, which has numerous practical applications; see, e.g., Penot~\cite{p_2000} and the references therein. Among other things, the survey~\cite{p_2000} discussed dualities related to quasiconvex functions,  subdifferentials of quasiconvex functions, relationships between dualities and subdifferentials, subdifferential calculus, and applications. Concerning continuity of quasiconvex functions, the fundamental result of Crouzeix~\cite{c_1981} says that a real-valued quasiconvex function  on $\mathbb R^n$ is almost everywhere Fr\'echet differentiable, hence almost everywhere continuous. In an infinite-dimensional setting, Rabier~\cite{r_2012} gave necessary and sufficient conditions for a real-valued quasiconvex function $f$ on a Baire topological vector space $X$ (in particular, Banach or Fr\'echet space) to be continuous at the points of a residual subset of~$X$. These conditions involve topological properties of the sublevel sets of~$f$. Recall~\cite[Section~2]{r_2012} that a residual subset of $X$ is the complement of a set of first category.
 
 \medskip
 Note that not only continuity of quasiconvex functions has a significant role but \textit{lower semicontinuity} of quasiconvex functions also has a key role in optimization theory. For instance, a proper lower semicontinuous quasiconvex function  $f$ defined on a reflexive Banach space~$X$ attains a global minimum if it is \textit{coercive} in the sense that $\displaystyle\lim_{\|x\|\to +\infty}f(x)=+\infty$ (see~\cite[Proposition~2.1]{p_2000}). It follows that a proper lower semicontinuous quasiconvex function  $f$ defined on a reflexive Banach space~$X$ attains a minimum on any bounded closed convex set having a nonempty intersection with its effective domain (see~\cite[Corollary of Theorem~1]{p_1966}).
 
  \medskip
  The notion of \textit{strongly quasiconvex function} (see~\cite{glm2025} and Sect.~\ref{Sect_2} below) is a special case of the concept of \textit{uniformly quasiconvex function} given by Polyak~\cite[p.~73]{p_1966}, if one chooses the function $\delta(\tau)=\tau^2$ for all $\tau\geq 0$ as the modulus of the uniform quasiconvexity. First-order characterizations of the strong quasiconvexity of differentiable functions on convex sets (see \cite[Theorem~1]{j_1993} and note that the condition $x,y\in C$ is missing there) follow from the results of Vladimirov et al.~\cite{VNCh_1978}. For twice differentiable functions, the corresponding second-order characterizations were given by Polyak~\cite[p.~73]{p_1966} (see also~\cite[Theorem~2]{j_1993}). The boundedness of the sublevel sets of strongly quasiconvex functions, the strong quasiconvexity of linear-quadratic functions, and the reduction of the strong quasiconvexity of multivariable functions to that of univariable functions with an application to the Euclidean norm were studied by Jovanovi\'c respectively in~\cite{j_1989},~\cite{j_1993}, and~\cite{j_1996}. 
  
  \medskip
  Recently, Lara~\cite[Theorem~1]{Lara_2022} has showed that any extended-real-valued strongly quasiconvex function $f$ defined on $\mathbb R^n$ is \textit{2-supercoercive}, that is $\displaystyle\liminf_{\|x\|\to +\infty}\frac{f(x)}{\|x\|^2}>0$ (in particular, $f$ is coercive). This result improves Theorem~3 in~\cite{j_1989} and leads to the solution existence of  optimization problems defined by lower semicontinuous strongly quasiconvex functions in~\cite[Corollary~3]{Lara_2022}. The lower semicontinuity of strongly quasiconvex functions is also a crucial assumption of Lara's convergence theorem~\cite[Theorem~10]{Lara_2022} for the proximal point algorithm applied to strongly quasiconvex optimization problems.
  
  \medskip
  A series of new results on strong quasiconvexity of functions in infinite dimensions, including a remarkable theorem on optimal value functions, have been obtained by Nam and Sharkansky~\cite{ns_2024}. 
  
  \medskip
  Very recently, a comprehensive survey on strongly quasiconvex functions and strongly quasiconvex optimization problems has been given by Grad et al.~\cite{glm2025}. Among the five potential directions for subsequent research on strongly quasiconvex functions and the related optimization problems formulated by the authors, we are particularly interested in the following one:
  ``1. \textit{Theoretical investigations.} Theoretical exploration of strongly quasiconvex functions remains a promising area of study. For instance, a fundamental open question is whether all real-valued strongly quasiconvex functions defined on $\mathbb R^n$ are necessarily continuous, akin to their convex counterparts. This question arises because all known examples of such functions (see Sect. 3.1) are continuous. Furthermore, identifying new strongly quasiconvex functions with practical relevance could significantly
  enhance the understanding and applicability of this class of functions.'' (see~\cite[p.~36]{glm2025}). For univariable functions, the above fundamental open question has been solved in the negative by Hang et al.~\cite{hly2024} (see Sect.~\ref{Sect_3} below), as well as Nam and Sharkansky~\cite[Examples~6.1 and~6.2]{ns_2024}.
  
  \medskip
  This paper aims at presenting the results on discontinuous univariable strongly quasiconvex  functions from the manuscript~\cite{hly2024} and proposing several types of discontinuous multivariable strongly quasiconvex functions. One of the obtained result says that \textit{for any $n\geq 2$, there exits a strongly quasiconvex function $f:\mathbb R^n\to\mathbb R$, which is discontinuous at infinitely many points}. Since the lower semicontinuity of strongly quasiconvex functions plays an important role in many issues, we will try to understand to which extent a multivariable strongly quasiconvex function can fail to be lower semicontinuous on the interior of its effective domain. The failure of the upper semicontinuity of some kinds of multivariable strongly quasiconvex functions is addressed as well.
  
  \medskip
  The organization of the paper is as follows. In Sect.~\ref{Sect_2}, we recall some definitions and examples. Section~\ref{Sect_3} is devoted to discontinuous univariable strongly quasiconvex functions. Constructions of discontinuous multivariable strongly quasiconvex functions are described in Sect.~\ref{Sect_4}. Concluding remarks are given in Sect.~\ref{Sect_5}. 
  
  \medskip
  Although all the constructions and proofs of Sect.~\ref{Sect_4} are valid for functions defined on Hilbert spaces, herein we prefer the traditional finite-dimensional setting used in the majority of literature on quasiconvex analysis and quasiconvex optimization to the infinite-dimensional setting. 
 
  \section{Preliminaries}\label{Sect_2}
 
 Let $f:\mathbb R^n\to\overline{\mathbb R}$, where $\overline{\mathbb R}=\mathbb R\cup \{\pm \infty\}$, be an extended-real-valued function defined on the Euclidean space $\mathbb R^n$. The scalar product and the norm of the latter space are denoted respectively by $\langle .,.\rangle$ and $\|.\|$.  We assume that $f$ is \textit{proper}, i.e., $f(x)>-\infty$ for all $x\in\mathbb R^n$ and there is some $x\in\mathbb R^n$ with $f(x)\in\mathbb R$. The \textit{effective domain} of $f$ is the set $${\rm dom}\, f:=\{x\in\mathbb R^n\mid f(x)<+\infty\}$$ (see~\cite[p.~5]{Rock_Wets_2009}). By definition, $f$ is said to be \textit{strongly quasiconvex} if there exists $\gamma>0$ such that for any $x,y\in {\rm dom}\, f$ and $t\in (0,1)$ one has
 \begin{equation}\label{sqc}
 	f((1-t)x+ty)\leq \max \{f(x),f(y)\}-\frac{1}{2}\gamma (1-t)t\|x-y\|^2.
 \end{equation}  
 
 From the definition and the assumptions made it follows that if $f$ is strongly quasiconvex, then ${\rm dom}\, f $ is a nonempty convex set. Indeed, for any $x,y\in {\rm dom}\, f$ and $t\in (0,1)$, the inequality~\eqref{sqc} implies that $f((1-t)x+ty)<+\infty$. As $f(x)>-\infty$ for all $x\in\mathbb R^n$, one must have $f((1-t)x+ty)\in\mathbb R$. Thus, the entire line segment $[x,y]$ is contained in ${\rm dom}\, f$.
 
 \medskip
 Given a nonempty set $\Omega\subset\mathbb R^n$ and a function $f:\Omega\to \mathbb R$, one can extend $f$ to a function $\tilde f$ defined on $\mathbb R^n$ by setting $\widetilde f(x)=f(x)$ for $x\in\Omega$ and  $\widetilde f(x)=+\infty$ for $x\in \mathbb R^n\setminus\Omega$. Clearly, $\tilde f$ is proper and ${\rm dom}\, \tilde f=\Omega$. In what follows, we have in mind this canonical extension when dealing with a function defined on a nonempty subset of $\mathbb R^n$. As usual, we say that $f$ is \textit{lower semicontinuous} (l.s.c.) at $\bar x\in\Omega$ if for every $\varepsilon>0$ there is an open set $U\subset\mathbb R^n$ such that $f(x)\geq f(\bar x)-\varepsilon$ for all $x\in\Omega\cap U$. If $\bar x\in\Omega$ and for every $\varepsilon>0$ there is an open set $U\subset\mathbb R^n$ such that $f(x)\leq f(\bar x)+\varepsilon$ for all $x\in\Omega\cap U$, then we say that $f$ is \textit{upper semicontinuous} (u.s.c.) at $\bar x$. If $f$ is both l.s.c. and u.s.c. at $\bar x\in\Omega$, then $f$ is said to be \textit{continuous} at $\bar x$. If $f$ is l.s.c. (resp., u.s.c., or continuous) at every point of $\Omega$, then $f$ is called l.s.c. (resp., u.s.c., or continuous) on $\Omega$.
 
 \medskip
 If~\eqref{sqc} holds for all $x,y$ from a convex set $C\subset {\rm dom}\, f$ and for all $t\in (0,1)$, then we say that $f$ is \textit{strongly quasiconvex on $C$ with the modulus $\gamma$}. In that case, $f$ can have at most one minimizer on $C$. This fact is well known (see, e.g.,~\cite[Corollary~3]{Lara_2022}) and can be verified by using the inequality~\eqref{sqc} and arguing by contradiction. Note that the lower semicontinuity of $f$ is not required in the argument.
 
 \medskip
 Recall that a proper real-valued function $f:\mathbb R^n\to\mathbb R$ is said to be \textit{strongly convex} on a convex set $C\subset {\rm dom}\, f$  if there exists $\gamma>0$ such that \begin{equation}\label{sc}
 	f((1-t)x+ty)\leq (1-t)f(x)+tf(y)-\frac{1}{2}\gamma (1-t)t\|x-y\|^2
 \end{equation} for any $x,y\in {\rm dom}\, f$ and $t\in (0,1)$. 
 
 \medskip
  Since $(1-t)f(x)+tf(y)\leq \max \{f(x),f(y)\}$ for all $x,y\in C\subset {\rm dom}\, f$ and $t\in (0,1)$, the inequality~\eqref{sc} implies the one in~\eqref{sqc}. Hence, strong convexity of a function on a convex set implies its strong quasiconvexity on that set. The reverse implication is invalid.
   
 \begin{example}\label{Ex1}
 	{\rm For $f(x):=\|x\|^2=\langle x,x\rangle$ for every $x\in\mathbb R^n$, where $n\geq 1$, it is easily verified that \begin{equation*}
 			f((1-t)x+ty)=(1-t)f(x)+tf(y)-(1-t)t\|x-y\|^2
 		\end{equation*} for all $x,y\in\mathbb R^n$ and $t\in (0,1)$. So, $f$ is strongly convex on $\mathbb R^n$ with the modulus $\gamma:=2$.}
 \end{example}
 
  \begin{example}\label{Ex1a}
 	{\rm (See~\cite[Remark~18]{Lara_2022}) For every positive natural number $k$, the function $f(x):=\max\big\{\|x\|^{1/2}, \|x\|^2-k\big\}$ is strongly quasiconvex on $\mathbb R^n$  without being convex.}
 \end{example}

 In the sequel, the interior, the closure, and the boundary of a set $\Omega\subset\mathbb R^n$ are denoted respectively by ${\rm int}\,\Omega$, $\overline{\Omega}$, and ${\rm bd}\,\Omega$. Recall that ${\rm bd}\,\Omega=\overline{\Omega}\setminus ({\rm int}\,\Omega).$ The closed unit ball in $\mathbb R^n$ is denoted by $\bar B_n(0,1)$. The closed ball (resp., the open ball) centered at $\bar x\in\mathbb R^n$ with radius $\rho>0$ is denoted by $\bar B(\bar x,\rho)$ (resp., $B(\bar x,\rho)$).
 
 \section{Univariable SQC Functions}\label{Sect_3}
 
 It is well known that a convex function can have discontinuity at some boundary points of its effective domain. Sine a strongly quasiconvex function needs not to be convex and vice versa, discontinuity of a SQC function $f$ on the boundary points of ${\rm dom}\, f$ deserves a careful investigation. The case of univariable SQC functions is considered in the following proposition.
 
 \begin{proposition}\label{Prop1} For any $a,b\in\mathbb R$ with $a<b$, there exists a strongly quasiconvex function $f:[a,b]\to\mathbb R$ which is continuous on the interval $(a,b)$, but discontinuous at both end-points $a$ and $b$ of the interval.
 \end{proposition}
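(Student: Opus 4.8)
The plan is to exhibit an explicit function that coincides with a strongly convex parabola on the open interval and whose values at the two end-points are lifted high enough to create jumps without destroying strong quasiconvexity. Set $c:=\frac{a+b}{2}$ and $r:=\frac{b-a}{2}$, fix any constant $M>r^2$, and define $f:[a,b]\to\mathbb R$ by $f(x):=(x-c)^2$ for $x\in(a,b)$ and $f(a):=f(b):=M$. Continuity on $(a,b)$ is immediate. Since $\lim_{x\to a^+}f(x)=r^2<M=f(a)$, and symmetrically at $b$, the function fails to be lower semicontinuous, hence is discontinuous, at both end-points. It remains to prove that $f$ is strongly quasiconvex, and I would claim this holds with modulus $\gamma=2$.

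The verification I would carry out is a short case analysis on the positions of $x,y\in[a,b]$, with the identity from Example~\ref{Ex1},
\[
\bigl((1-t)x+ty-c\bigr)^2=(1-t)(x-c)^2+t(y-c)^2-(1-t)t\,(x-y)^2,
\]
as the only computational tool. The case $x=y$ is trivial, so assume $x\ne y$ and $t\in(0,1)$; then $p:=(1-t)x+ty$ lies strictly between $x$ and $y$, hence $p\in(a,b)$ and $f(p)=(p-c)^2$, so the displayed identity applies to the left-hand side of~\eqref{sqc}. When both $x,y\in(a,b)$ one has $f=(\cdot-c)^2$ at $x$, $y$ and $p$ simultaneously, and combining the identity with $(1-t)(x-c)^2+t(y-c)^2\le\max\{f(x),f(y)\}$ yields~\eqref{sqc} at once; this is nothing but the strong convexity of the parabola with modulus $2$.

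The step I expect to be the crux is the end-point case, say $x=a$ and $y\in(a,b]$, the case $x=b$ being symmetric and the remaining pairs reducing to these by the symmetry of~\eqref{sqc} in $x,y$. Here $\max\{f(a),f(y)\}=M$ because $f(a)=M\ge f(y)$ for every $y\in[a,b]$, and substituting $(a-c)^2=r^2$ into the identity turns the required inequality~\eqref{sqc} into
\[
(1-t)r^2+t(y-c)^2+\Bigl(\tfrac{\gamma}{2}-1\Bigr)(1-t)t\,(a-y)^2\le M.
\]
With $\gamma=2$ the last term vanishes, and the left-hand side is then at most $(1-t)r^2+t\,r^2=r^2<M$ because $(y-c)^2\le r^2$. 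This is precisely where lifting the end-point values above $r^2$ makes the estimate succeed, and where the sign of the coefficient $\tfrac{\gamma}{2}-1$ forces the choice $\gamma\le 2$; taking $\gamma=2$, the strong-convexity modulus of the parabola, is the sharpest admissible value. Collecting the cases establishes~\eqref{sqc} for all admissible $x,y,t$, which completes the construction.
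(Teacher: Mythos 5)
Your proof is correct and follows essentially the same route as the paper's: lift the end-point values of a continuous strongly convex parabola above its maximum on $[a,b]$ (your $M>r^2$ plays the role of the paper's $M+1$) and verify~\eqref{sqc} by the same case analysis, with your exact identity replacing the paper's strong-convexity inequality~\eqref{str_convex}. The only quibble is your closing side remark: the constraint $\gamma\le 2$ is actually forced by the interior case (it is the strong-convexity modulus of the parabola, as seen by taking $x,y$ symmetric about $c$ and $t=\tfrac12$), not by the end-point estimate, which for $M>r^2$ would tolerate moduli slightly above $2$.
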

\begin{proof} To prove the proposition, let $f_0:[a,b]\to\mathbb R$ be a continuous, strongly convex function (for example, $f_0(x)=x^2$). Since $f_0$ is strongly convex, there is $\gamma>0$ such that
\begin{equation}\label{str_convex}
	f_0((1-t)x+ty)\leq (1-t)f_0(x)+tf_0(y)-\frac{1}{2}\gamma (1-t)t(x-y)^2
\end{equation} for any $x,y\in [a,b]$ and $t\in (0,1)$.  Put $M=\max\{f_0(x)\mid x\in [a,b]\}$ and define
\begin{equation}\label{function1}
	f(x)=\begin{cases}
		f_0(x) & {\rm for}\ x\in (a,b)\\
		M+1 & {\rm for}\ x\in\{a,b\}.
	\end{cases}	
\end{equation} Take any $x,y\in [a,b]$ with $x<y$ and $t\in (0,1)$. To verify the inequality~\eqref{sqc}, consider the situations: (i) $x,y\in (a,b)$; (ii) $x=a$ and $y\in (a,b)$; (iii) $x\in (a,b)$ and $y=b$; (iv) $x=a$ and $y=b$. If~(i) occurs, then~\eqref{str_convex} implies~\eqref{sqc} because by~\eqref{function1} we have $f_0((1-t)x+ty)=f((1-t)x+ty)$ and
$$(1-t)f_0(x)+tf_0(y)=(1-t)f(x)+tf(y)\leq\max \{f(x),f(y)\}.$$ In situation~(ii), by~\eqref{function1} and~\eqref{str_convex} one has 
\begin{eqnarray*}\begin{array}{rcl}
		f((1-t)a+ty)& = & f_0((1-t)a+ty)\\ &\leq & (1-t)f_0(a)+tf_0(y)-\dfrac{1}{2}\gamma (1-t)t(a-y)^2\\
		& \leq & (1-t)M+tf(y)-\dfrac{1}{2}\gamma (1-t)t(a-y)^2\\
		& \leq & \max \{f(a),f(y)\}-\dfrac{1}{2}\gamma (1-t)t(a-y)^2,
	\end{array}
\end{eqnarray*} which justifies~\eqref{sqc}. Situation~(iii) can be treated similarly. If~(iv) happens, then it follows from~\eqref{function1} and~\eqref{str_convex} that 
\begin{eqnarray*}\begin{array}{rcl}
		f((1-t)a+tb)& = & f_0((1-t)a+tb)\\ &\leq & (1-t)f_0(a)+tf_0(b)-\dfrac{1}{2}\gamma (1-t)t(a-b)^2\\
		& \leq & M-\dfrac{1}{2}\gamma (1-t)t(a-b)^2\\
		& \leq & \max \{f(a),f(b)\}-\dfrac{1}{2}\gamma (1-t)t(a-b)^2.
	\end{array}
\end{eqnarray*} Hence~\eqref{sqc} is valid.
 Since $$\lim\limits_{x\to a}f(x)=f_0(a)< M+1=f(a)$$ and $\lim\limits_{x\to b}f(x)=f_0(b)< M+1=f(b)$, the function $f$ is discontinuous at both end-points $a$ and $b$ of the interval $(a,b)$.
 $\hfill\Box$
 \end{proof}
 
 \begin{remark}{\rm The above construction of a discontinuous SQC function applies when the initial continuous function $f_0:[a,b]\to\mathbb R$ is  strongly quasiconvex. Indeed, let $\gamma>0$ be such that
 \begin{equation}\label{sqc_f0}
 	f_0((1-t)x+ty)\leq \max \{f_0(x),f_0(y)\}-\frac{1}{2}\gamma (1-t)t(x-y)^2
 \end{equation} for any $x,y\in [a,b]$ and $t\in (0,1)$. Define $f$ by~\eqref{function1}, where  $$M:=\max\{f_0(x)\mid x\in [a,b]\},$$ and suppose that $x,y\in [a,b]$, $x<y$, and $t\in (0,1)$ are given arbitrarily. If $x,y\in (a,b)$, then~\eqref{sqc} follows  from~\eqref{sqc_f0}, because $f((1-t)x+ty)=f_0((1-t)x+ty)$, $f(x)=f_0(x)$, and $f(y)=f_0(y)$. If $x=a$ and $y\in (a,b)$, then by~\eqref{function1} and~\eqref{sqc_f0} we get 
\begin{eqnarray*}\begin{array}{rcl}
	f((1-t)a+ty)& = & f_0((1-t)a+ty)\\ &\leq & \max \{f_0(x),f_0(y)\}-\dfrac{1}{2}\gamma (1-t)t(a-y)^2\\
	& \leq & M-\dfrac{1}{2}\gamma (1-t)t(a-y)^2\\
	& \leq & \max \{f(a),f(y)\}-\dfrac{1}{2}\gamma (1-t)t(a-y)^2.
\end{array}
\end{eqnarray*} Similarly, we can show that~\eqref{sqc} holds for $x\in (a,b)$ and $y=b$. If $x=a$ and $y=b$, then~\eqref{function1} and~\eqref{sqc_f0} imply that 
\begin{eqnarray*}\begin{array}{rcl}
	f((1-t)a+tb)& = & f_0((1-t)a+tb)\\ &\leq & \max \{f_0(x),f_0(y)\}-\dfrac{1}{2}\gamma (1-t)t(a-b)^2\\
	& \leq & M-\dfrac{1}{2}\gamma (1-t)t(a-b)^2\\
	& \leq & \max \{f(a),f(b)\}-\dfrac{1}{2}\gamma (1-t)t(a-b)^2.
\end{array}
\end{eqnarray*} Thus, $f$ is a SQC function, which is continuous on $(a,b)$, but discontinuous at both end-points of the interval.
}
\end{remark}

 \begin{proposition}\label{Prop2} There exits a univariable strongly quasiconvex function which is discontinuous at a point in the interior of its effective domain.
 \end{proposition}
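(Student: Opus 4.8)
The plan is to contrast this with Proposition~\ref{Prop1}. There, discontinuity was produced by \emph{raising} the values at the boundary points above the maximum $M$ of a strongly convex model function. That device cannot work at an interior point: if $\bar x\in{\rm int}\,({\rm dom}\, f)$, then $\bar x=(1-t)x+ty$ for suitable $x,y$ on either side of $\bar x$, and \eqref{sqc} forces $f(\bar x)\leq\max\{f(x),f(y)\}$, so $f$ cannot jump upward at an interior point. The only remaining possibility is a downward jump, and the sole interior location at which lowering a single value keeps every sublevel set convex is the (unique) minimizer. Accordingly, I would start from a strongly convex model, say $f_0(x)=x^2$ on $\mathbb R$, which is strongly quasiconvex with modulus $\gamma=2$ and has its unique minimizer at the interior point $\bar x=0$, and define
\begin{equation*}
	f(x)=\begin{cases} x^2 & \text{if } x\neq 0,\\ -1 & \text{if } x=0.\end{cases}
\end{equation*}
Since $\lim_{x\to 0}f(x)=0\neq -1=f(0)$, the function $f$ is discontinuous at the interior point $0$ (in fact l.s.c.\ but not u.s.c.\ there).

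Next I would verify \eqref{sqc} with $\gamma=2$ for arbitrary $x,y\in\mathbb R$, $x\neq y$, and $t\in(0,1)$, writing $z:=(1-t)x+ty$ and splitting according to whether $z$ or an endpoint equals $0$. First, if $z\neq 0$, $x\neq 0$, and $y\neq 0$, then $f$ coincides with $f_0$ at all three points and \eqref{sqc} is exactly the strong-convexity identity for $x^2$, which holds with $\gamma=2$. Second, if $z=0$ (so $x<0<y$), then the left-hand side equals $f(0)=-1$, while writing $a=-x$ and $b=y$ a short computation gives $\frac{1}{2}\gamma(1-t)t(x-y)^2=ab$ and $\max\{f(x),f(y)\}=\max\{a^2,b^2\}\geq ab$; hence the right-hand side is $\geq 0>-1$ and \eqref{sqc} holds with room to spare.

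The delicate case, which I expect to be the main obstacle, is when an endpoint coincides with the dropped point, say $x=0$ and $y\neq 0$ (the case $y=0$ being symmetric). Here $z=ty\neq 0$, so $f(z)=t^2y^2$, while $\max\{f(0),f(y)\}=\max\{-1,y^2\}=y^2$ and $(x-y)^2=y^2$; thus \eqref{sqc} reduces, after dividing by $y^2>0$, to $t^2\leq 1-(1-t)t=1-t+t^2$, that is to $t\leq 1$, which indeed holds. The point worth stressing is that this is precisely where a downward spike located away from the minimizer, or an upward spike anywhere, would fail: if $f_0$ were strictly monotone near the modified point, one could choose $y$ on the lower side with $f_0(y)<f_0(\bar x)$, and then for an intermediate $z$ one would have $f(z)=f_0(z)>f_0(y)=\max\{f(\bar x),f(y)\}$, violating even ordinary quasiconvexity. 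Placing the jump at the minimizer guarantees $f(y)\geq f_0(0)>f(0)$ for every $y$, so the maximum on the right of \eqref{sqc} is never controlled by the dropped value. Collecting the three cases shows that $f$ is strongly quasiconvex with $\gamma=2$ and discontinuous at the interior point $0$, completing the proof; the same argument applies verbatim to any strongly convex $f_0$ with a unique interior minimizer $\bar x$ and any assigned value $f(\bar x)<f_0(\bar x)$.
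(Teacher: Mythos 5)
Your proof is correct, but it takes a genuinely different route from the paper's own proof of Proposition~\ref{Prop2}. The paper does not lower a single value at the minimizer; instead it takes an increasing continuous strongly quasiconvex $f_0$ on $[0,1]$ and shifts it down by $1$ on the whole initial segment $[0,c]$ (formula \eqref{function2}), so the jump occurs at an arbitrarily prescribed interior point $c$; the monotonicity of $f_0$ is what makes $\max\{f_0(x),f_0(y)\}=f_0(y)$ in the mixed case $x\in[0,c]$, $y\in(c,1]$ and lets \eqref{sqc} survive the shift. Your construction --- dropping the value only at the unique minimizer of $x^2$ --- is precisely the device the paper deploys later in Theorem~\ref{Thm2} and the remark following it (formula \eqref{function6}, here specialized to $n=1$, $\alpha=1$), and your three-way case analysis ($z\neq 0$ with both endpoints nonzero; $z=0$; one endpoint equal to $0$) mirrors that proof, with accurate computations in each case: when $z=0$ one indeed gets $(1-t)t(x-y)^2=ab\leq\max\{a^2,b^2\}$ with $a=-x$, $b=y$, and the endpoint case reduces to $t^2\leq 1-t+t^2$. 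What each approach buys: yours is dimension-free (it works verbatim on $\mathbb R^n$, which is why the paper reserves it for the multivariable Theorem~\ref{Thm2}) and extends, as you note, to any strongly quasiconvex $f_0$ with an interior minimizer; but it can place the discontinuity \emph{only} at the minimizer, as your own side argument shows. The paper's interval-shift is intrinsically one-dimensional (it needs monotonicity, which is exactly why the paper says before Theorem~\ref{Thm2} that this construction cannot be applied in the multivariable setting), but it puts the jump at any prescribed interior point $c\in(0,1)$ and iterates to produce infinitely many interior discontinuities in Proposition~\ref{Prop3}. One caveat on your structural claim: the assertion that the minimizer is ``the sole interior location'' at which a discontinuity can be created is true only for single-point modifications; it does not rule out the paper's scheme, which lowers $f_0$ on a whole subinterval rather than at one point and thereby produces a (downward-limiting, l.s.c.) discontinuity at a non-minimizing interior point.
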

\begin{proof} 
Let $f_0:[0,1]\to\mathbb R$ be any continuous, strongly quasiconvex, increasing function (for example, $f_0(x)=x^2$). Choose $\gamma>0$ such that~\eqref{sqc_f0} holds for all $x,y\in [a,b]$ and $t\in (0,1)$. Take any $c\in (0,1)$ and put
\begin{equation}\label{function2}
	f(x)=\begin{cases}
		f_0(x)-1 & {\rm for}\ x\in [0,c]\\
		f_0(x) & {\rm for}\ x\in (c,1].
	\end{cases}	
\end{equation} Clearly, $f$ is lower semicontinuous on $[0,1]$, but discontinuous at $c$. Given any $x,y\in [0,1]$ and $t\in (0,1)$, we distinguish three cases: (i) $x,y\in [0,c]$; (ii) $x,y\in (c,1]$; (iii) $x\in [0,c]$ and $y\in (c,1]$. In the first two cases, the fact that~\eqref{sqc} follows from~\eqref{sqc_f0} and~\eqref{function2} is obvious. Consider the third case. If $(1-t)x+ty\in [0,c]$, then by~\eqref{sqc_f0},~\eqref{function2}, and the increasing property of $f_0$ we see that
\begin{eqnarray*}\begin{array}{rcl}
	f((1-t)x+ty)& = & f_0((1-t)x+ty)-1\\ &\leq & \max \{f_0(x),f_0(y)\}-\dfrac{1}{2}\gamma (1-t)t(x-y)^2-1\\
	& = & f_0(y)-\dfrac{1}{2}\gamma (1-t)t(x-y)^2-1\\
	& =  & f(y)-\dfrac{1}{2}\gamma (1-t)t(x-y)^2-1\\
	& \leq & \max \{f(x),f(y)\}-\dfrac{1}{2}\gamma (1-t)t(x-y)^2;
\end{array}
\end{eqnarray*} thus~\eqref{sqc} is valid. If $(1-t)x+ty\in (c,1]$, then we have \begin{eqnarray*}\begin{array}{rcl}
	f((1-t)x+ty)& = & f_0((1-t)x+ty)\\ &\leq & \max \{f_0(x),f_0(y)\}-\dfrac{1}{2}\gamma (1-t)t(x-y)^2\\
	& = & f_0(y)-\dfrac{1}{2}\gamma (1-t)t(x-y)^2\\
	& = & f(y)-\dfrac{1}{2}\gamma (1-t)t(x-y)^2\\
	& = & \max \{f(x),f(y)\}-\dfrac{1}{2}\gamma (1-t)t(x-y)^2,
\end{array}
\end{eqnarray*} which justifies~\eqref{sqc}.

We have shown that $f$ is a SQC function with the modulus $\gamma$. $\hfill\Box$
\end{proof}

\begin{remark}{\rm The just described construction yields a  strongly quasiconvex function, which is upper semicontinuous on $[0,1]$, but discontinuous at $c$, if one puts 
\begin{equation*}\label{function3}
f(x)=\begin{cases}
	f_0(x)-1 & {\rm for}\ x\in [0,c)\\
	f_0(x) & {\rm for}\ x\in [c,1].
\end{cases}	
\end{equation*}
}
\end{remark}

\begin{proposition}\label{Prop3} There exits a univariable strongly quasiconvex function which is discontinuous at infinitely many points in the interior of its effective domain.
 \end{proposition}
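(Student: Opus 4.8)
The plan is to upgrade Proposition~\ref{Prop2} from a single jump to countably many by isolating the following elementary principle: if $f_0$ is \emph{increasing} and strongly quasiconvex on an interval with modulus $\gamma>0$, and $g$ is \emph{any} nondecreasing function on that interval, then $f_0+g$ is again strongly quasiconvex with the same modulus $\gamma$. Once this is in hand, the only remaining task is to produce a nondecreasing $g$ with infinitely many jumps in the interior of the domain.

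Concretely, first I would fix a continuous, increasing, strongly quasiconvex function $f_0:[0,1]\to\mathbb R$ (for instance $f_0(x)=x^2$) and choose $\gamma>0$ so that~\eqref{sqc_f0} holds for all $x,y\in[0,1]$ and $t\in(0,1)$. Next I would pick a strictly increasing sequence $c_k\in(0,1)$ with $c_k\to 1$ (say $c_k=1-\frac{1}{k+1}$) together with weights $a_k>0$ satisfying $\sum_{k\ge1}a_k<\infty$, and define $g:[0,1]\to\mathbb R$ by $g(x):=\sum_{\{k:\,c_k<x\}}a_k$. Then $g$ is bounded (hence $f:=f_0+g$ is real-valued), nondecreasing, and has an upward jump of size $a_k$ at each $c_k$.

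The verification of~\eqref{sqc} for $f$ is the step I would carry out next, and the point is that, in contrast with Propositions~\ref{Prop1} and~\ref{Prop2}, it requires no case analysis. Given $x,y\in[0,1]$ with $x<y$ and $t\in(0,1)$, put $z:=(1-t)x+ty$. Since $z<y$ and $g$ is nondecreasing, $g(z)\le g(y)$; since $f_0$ is increasing, $\max\{f_0(x),f_0(y)\}=f_0(y)$, so~\eqref{sqc_f0} yields $f_0(z)\le f_0(y)-\frac12\gamma(1-t)t(x-y)^2$. Adding these two inequalities gives
\begin{equation*}
f(z)=f_0(z)+g(z)\le f_0(y)+g(y)-\tfrac12\gamma(1-t)t(x-y)^2=\max\{f(x),f(y)\}-\tfrac12\gamma(1-t)t(x-y)^2,
\end{equation*}
where the last equality uses that $f$ is increasing. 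This is exactly~\eqref{sqc}, so $f$ is strongly quasiconvex with modulus $\gamma$. Finally, because the $c_k$ are strictly increasing and accumulate only at the boundary point $1$, each $c_k$ is isolated among the jump points, so $g$ (and hence $f$, as $f_0$ is continuous) genuinely jumps by $a_k$ there; thus $f$ is discontinuous at each of the infinitely many interior points $c_k\in(0,1)={\rm int}({\rm dom}\,f)$.

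The main obstacle I anticipate is not the strong quasiconvexity — that becomes almost automatic once the monotone structure is in place — but rather keeping $f$ finite while packing in infinitely many discontinuities and ensuring that each prescribed point is a \emph{true} discontinuity. Both issues are resolved by the same device: taking the weights summable, $\sum_k a_k<\infty$, controls $g$ and keeps $f$ real-valued, while choosing the $c_k$ strictly monotone (so that their only accumulation point is the boundary point $1$) guarantees that every $c_k$ is an isolated jump of $f$ rather than a point where the jumps smear into continuity.
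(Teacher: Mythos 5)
Your proposal is correct and is essentially the paper's own construction: the paper likewise takes an increasing continuous SQC function $f_0$ on $[0,1]$ and adds a bounded nondecreasing step function (namely $-1+2^{-k+1}$ on $\big((k+1)^{-1},k^{-1}\big]$, with jumps at the points $k^{-1}$ accumulating at the endpoint $0$, just as your $\sum_{c_k<x}a_k$ jumps at the $c_k$ accumulating at $1$). Your explicit lemma --- increasing SQC plus nondecreasing is SQC with the same modulus, verified in one line since $f$ itself is increasing --- is a clean packaging of the verification the paper merely defers to the case analysis in the proof of Proposition~\ref{Prop2}, but it is the same underlying argument.
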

\begin{proof} 
To prove the proposition, it suffices to take any continuous, strongly quasiconvex, increasing function $f_0:[0,1]\to\mathbb R$, which satisfies~\eqref{sqc_f0} for a constant $\gamma>0$, and define
\begin{equation*}\label{function4}
f(x)=\begin{cases}
	f_0(x)-1 & {\rm for}\ x=0\\
	f_0(x)-1+2^{-k+1} & {\rm for}\ x\in \Big((k+1)^{-1},k^{-1}\Big]\ \; {\rm for}\ \; k=1,2,....
\end{cases}	
\end{equation*} Clearly, $f$ is l.s.c. on $[0,1]$, but discontinuous at the points $k^{-1}$ with $k=1,2,....$. The verification of~\eqref{sqc} can be done similarly as the one given in the proof of Proposition~\ref{Prop2}.
 $\hfill\Box$
\end{proof}

\section{Multivariable SQC Functions}\label{Sect_4}

Since the constructions given in Section~\ref{Sect_3} can be used just for univariable functions, discontinuity of a multivariable SQC function on the boundary and interior of its effective domain requires further clarification.

\medskip
The following theorem reveals a possible discontinuity of multivariable SQC functions on the boundaries of their effective domains. 

\begin{theorem}\label{Thm1} For any $n\geq 2$, there exits a strongly quasiconvex function $f:C\to\mathbb R$ with $C\subset\mathbb R$ being a closed and convex set with nonempty interior, which is continuous on ${\rm int}\, C$, but discontinuous at infinitely many points in ${\rm bd}\,C$.
\end{theorem}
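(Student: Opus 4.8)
The plan is to mimic the construction of Proposition~\ref{Prop1}, replacing the two-point boundary $\{a,b\}$ of an interval by the boundary of a strictly convex body, so that lifting $f$ to a large value there produces infinitely many discontinuities at once. Concretely, I would take $C:=\bar B_n(0,1)$, which is compact and convex, has nonempty interior ${\rm int}\,C=B(0,1)$, is \emph{strictly} convex, and whose boundary ${\rm bd}\,C$ (the unit sphere) contains infinitely many points. As the underlying smooth profile I would use $f_0(x):=\|x\|^2$, which by Example~\ref{Ex1} is strongly convex on $C$ with modulus $\gamma=2$ and obeys the identity $\|(1-t)x+ty\|^2=(1-t)\|x\|^2+t\|y\|^2-(1-t)t\|x-y\|^2$. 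Setting $M:=\max_{x\in C}f_0(x)=1$, I would define $f$ to agree with $f_0$ on ${\rm int}\,C$ and to equal the constant $M+1$ on ${\rm bd}\,C$.

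The continuity and discontinuity assertions are then immediate. Since ${\rm int}\,C$ is open and $f=f_0$ there, $f$ is continuous on ${\rm int}\,C$. At any $\bar x\in{\rm bd}\,C$, approaching $\bar x$ along ${\rm int}\,C$ gives $f(x)=\|x\|^2\to\|\bar x\|^2=M<M+1=f(\bar x)$, so $f$ fails to be lower semicontinuous, hence is discontinuous, at each of the infinitely many points of the sphere.

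The substance of the argument is the verification of~\eqref{sqc}. Fix $x,y\in C$ and $t\in(0,1)$, and set $z:=(1-t)x+ty$. The key point is that strict convexity of the ball forces $z\in{\rm int}\,C$ whenever $x\neq y$, so that $f(z)=f_0(z)=\|z\|^2$ (the case $x=y$ being trivial); this is exactly what prevents the left-hand side of~\eqref{sqc} from being inflated to the boundary value $M+1$. If both $x,y\in{\rm int}\,C$, then~\eqref{sqc} reduces to the strong convexity of $f_0$, just as in case~(i) of Proposition~\ref{Prop1}. If instead at least one of $x,y$ lies on ${\rm bd}\,C$, then $\max\{f(x),f(y)\}\geq M+1$, and the identity from Example~\ref{Ex1} yields $\|z\|^2+\tfrac12\gamma(1-t)t\|x-y\|^2=(1-t)\|x\|^2+t\|y\|^2\leq M<M+1\leq\max\{f(x),f(y)\}$, which is precisely~\eqref{sqc}.

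The main obstacle, and the genuinely new feature compared with the univariable proofs, is ensuring that the interpolated point $z$ can never land on the boundary where $f$ has been raised to $M+1$; otherwise the required strict decrease in~\eqref{sqc} could break down. This is the reason for insisting that $C$ be strictly convex (hence the choice of the ball rather than, say, a polytope, whose flat faces contain whole segments). Once this is secured, the remaining estimates are routine and are controlled entirely by the gap between $M$ and the boundary value together with the compactness of $C$.
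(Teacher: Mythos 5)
Your proof is correct and follows essentially the same route as the paper: both take $C=\bar B_n(0,1)$ and $f_0(x)=\|x\|^2$, perturb $f_0$ only on ${\rm bd}\,C$, and rely on the strict convexity of the ball to guarantee that $(1-t)x+ty\in{\rm int}\,C$ for distinct $x,y\in C$, so the left-hand side of~\eqref{sqc} never picks up the inflated boundary value. The only difference is cosmetic: you lift the whole sphere to the constant $M+1$ (extending the pattern of Proposition~\ref{Prop1}), whereas the paper adds $\alpha$ to $f_0$ on a subset $A$ of the sphere that is dense with dense complement; both versions yield discontinuity at every boundary point and verify~\eqref{sqc} with modulus $\gamma=2$ by the same case analysis.
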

\begin{proof} Choose $C=\bar B_n(0,1)$, where $n\geq 2$. Let $f_0(x)=\|x\|^2$ for all $x\in\mathbb R^n$. As it has been noted in Example~\ref{Ex1}, $f_0$ is strongly convex on $\mathbb R^n$ with the modulus $\gamma=2$. Moreover, one has 
\begin{equation}\label{sc_f0}
		f_0((1-t)x+ty)=(1-t)f_0(x)+tf_0(y)-(1-t)t\|x-y\|^2
\end{equation} for all $x,y\in\mathbb R^n$ and $t\in (0,1)$. 

Let $A$ be such a subset of ${\rm bd}\,C$, that both $A$ and ${\rm bd}\,C\setminus A$ are dense in the \textit{relative topology} (see~\cite[p.~51]{Kelley_1955}) of ${\rm bd}\,C\subset\mathbb R^n$ (for instance, one can take $$A=\big\{x=(x_1,\dots,x_n)\in {\rm bd}\,C\mid x_1\in\mathbb Q\big\}).$$ Define 
\begin{equation}\label{function5}
	f(x)=\begin{cases}
		f_0(x) & {\rm for}\ x\in C\setminus A\\
		f_0(x)+\alpha & {\rm for}\ x\in A,
	\end{cases}	
\end{equation} where $\alpha>0$ is an arbitrary constant. Clearly, this function $f:C\to\mathbb R$ is continuous on ${\rm int}\, C$, u.s.c. on $C$, but discontinuous at every point in ${\rm bd}\,C$. Note in addition that $f_0(x)\leq f(x)$ for all $x\in C$. 

To show that $f$ is strongly quasiconvex on $C$ with the modulus $\gamma=2$, we take any distinct points $x,y\in C$ and any value $t\in (0,1)$. If both points $x,y$ belong to ${\rm int}\, C$, then the inequality~\eqref{sqc} with $\gamma:=2$ is the consequence of~\eqref{function5} and the above-mentioned strong convexity of $f_0$. Now, suppose that $x\in {\rm int}\, C$ and $y\in {\rm bd}\,C$. Since $f((1-t)x+ty)=f_0((1-t)x+ty)$ by~\eqref{function5}, from~\eqref{sc_f0} we can deduce that
\begin{eqnarray}\label{estimates_1}
\begin{array}{rcl}
	f((1-t)x+ty) & = & (1-t)f_0(x)+tf_0(y)-(1-t)t\|x-y\|^2\\
	& \leq & \max\big\{f_0(x),f_0(y)\big\}-(1-t)t\|x-y\|^2\\
	& \leq & \max\big\{f(x),f(y)\big\}-(1-t)t\|x-y\|^2.
\end{array}
\end{eqnarray} Thus,~\eqref{sqc} holds with $\gamma=2$. The analysis of the situation where  $y\in {\rm int}\, C$ and $x\in {\rm bd}\,C$ is similar. Next, suppose that both points $x$ and $y$ belong to ${\rm bd}\,C$. As $$(1-t)x+ty\in {\rm int}\, C\subset C\setminus A,$$ one has $f((1-t)x+ty)=f_0((1-t)x+ty)$. So,~\eqref{sc_f0} assures that the estimates in~\eqref{estimates_1} are valid. Therefore, again we get~\eqref{sqc} with $\gamma=2$. 

The proof is complete.
$\hfill\Box$
\end{proof}

In connection with Proposition~\ref{Prop2}, a natural question arises: \textit{Whether there exits a mutivariable strongly quasiconvex function which is discontinuous at a point in the interior of its effective domain, or not?} 

\medskip
To solve the above question, one cannot apply the construction of an increasing univariable strongly quasiconvex function used for proving Proposition~\ref{Prop2}. The next theorem deals with the question by using another construction.  

\begin{theorem}\label{Thm2} For any $n\geq 2$, there exits a strongly quasiconvex function $f:\mathbb R^n\to\mathbb R$, which is discontinuous at a point $c\in \mathbb R^n$, but continuous on the set $\mathbb R^n\setminus\{c\}$.
\end{theorem}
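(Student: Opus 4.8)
The plan is to reuse the strongly convex paraboloid $f_0(x)=\|x\|^2$ from Example~\ref{Ex1}, which satisfies the exact identity~\eqref{sc_f0} with modulus $\gamma=2$, and to manufacture the discontinuity by changing $f_0$ at a single point. The first thing I would observe is that one is \emph{not} free to choose how and where to do this. \emph{Raising} the value at an interior point destroys strong quasiconvexity: if $c$ lies strictly between $x$ and $y$ with $c=(1-t)x+ty$, then $f(c)=f_0(c)+\alpha$ while the right-hand side of~\eqref{sqc} still involves only $\max\{f_0(x),f_0(y)\}$, and since $(1-t)t\|x-y\|^2$ can be made arbitrarily small by taking $x,y$ close to $c$, the surplus $\alpha$ cannot be absorbed. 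So lowering is the only viable direction, and even then the \emph{location} of the cut is decisive.

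The construction I would use is therefore to place the discontinuity at the \emph{global minimizer} of $f_0$ and lower the value there. After a translation we may take this point to be $c=0$ and keep $f_0(x)=\|x\|^2$, so that~\eqref{sc_f0} applies verbatim with $\gamma=2$. Fix $\alpha>0$ and define $f(x)=f_0(x)=\|x\|^2$ for $x\neq 0$ and $f(0)=-\alpha$. Then $\lim_{x\to 0}f(x)=0>-\alpha=f(0)$, so $f$ is upper semicontinuous but not lower semicontinuous at $0$, while $f=f_0$ is continuous on $\mathbb R^n\setminus\{0\}$; this yields exactly the required continuity behaviour. It remains to verify~\eqref{sqc} with $\gamma=2$.

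For the verification I would fix distinct $x,y$ and $t\in(0,1)$, set $z=(1-t)x+ty$, and split into cases according to the location of the origin. When none of $x,y,z$ is the origin, the inequality is exactly the $\max$-version of~\eqref{sc_f0}, hence holds. When $z=0$ (the origin strictly inside the segment), the left-hand side $f(0)=-\alpha$ is only \emph{smaller} than $f_0(0)=0$, while~\eqref{sc_f0} already gives $0\le\max\{f_0(x),f_0(y)\}-(1-t)t\|x-y\|^2$; so the inequality survives a fortiori. The delicate case is when the origin is an \emph{endpoint}, say $x=0$ (the case $y=0$ being symmetric): then $z\neq 0$, so the left side is $f_0(z)$, but on the right the lowered value $f(0)=-\alpha$ now sits inside the maximum in place of $f_0(0)=0$.

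The endpoint case is precisely where the choice of $c$ as the minimizer pays off, and it is the main obstacle to anticipate. In general, lowering an endpoint value shrinks $\max\{f(x),f(y)\}$ and can break~\eqref{sqc}; this is exactly why a single-point modification at a \emph{generic} interior point fails, and why the monotonicity trick of Proposition~\ref{Prop2} does not transfer to several variables. But because the origin minimizes $f_0$, for every $y\neq 0$ one has $f_0(y)=\|y\|^2\ge 0>-\alpha=f(0)$, so the maximum on the right is attained at $y$, equals $f_0(y)$, and is \emph{unaffected} by the lowering. Writing~\eqref{sc_f0} for the pair $(0,y)$ and using $f_0(0)=0$ gives $f_0(z)=t\,f_0(y)-(1-t)t\|y\|^2$, so the required inequality $f_0(z)\le f_0(y)-(1-t)t\|y\|^2$ collapses to $t\,f_0(y)\le f_0(y)$, which holds because $t<1$ and $f_0(y)\ge 0$. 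Hence $f$ is strongly quasiconvex with modulus $\gamma=2$, and the construction answers the question.
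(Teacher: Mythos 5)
Your construction and verification coincide with the paper's own proof of Theorem~\ref{Thm2}: the same function $f(x)=\|x\|^2$ for $x\neq 0$, $f(0)=-\alpha$, the same case split on whether $0$ is an endpoint, the midpoint $z_t$, or absent, and the same key observation that lowering the value at the global minimizer leaves $\max\{f(x),f(y)\}$ unchanged, so the argument is correct. One slip in terminology: since $f(0)=-\alpha<0=\lim_{x\to 0}f(x)$, your $f$ is \emph{lower} semicontinuous but not \emph{upper} semicontinuous at $0$ (you state the reverse); this is harmless here because the theorem only asserts discontinuity at $c$, but the labels should be swapped.
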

\begin{proof} Let $f_0(x)=\|x\|^2$ for all $x\in\mathbb R^n$. As it has been noted before, from~\eqref{sc_f0} we have
\begin{equation}\label{sc_impl} f_0((1-t)x+ty)\leq \max\big\{f_0(x),f_0(y)\big\})-(1-t)t\|x-y\|^2
			\end{equation}
for all $x,y\in\mathbb R^n$ and $t\in (0,1)$.
	
	Put 
	\begin{equation}\label{function6}
		f(x)=\begin{cases}
			f_0(x) & {\rm for}\ x\neq 0\\
			-\alpha & {\rm for}\ x=0,
		\end{cases}	
	\end{equation} where $\alpha>0$ can be chosen arbitrarily. Clearly, $f$ is continuous on the set $\mathbb R^n\setminus\{0\}$, l.s.c. at $0$, but not u.s.c. at $0$. 
	
	To show that $f$ is strongly quasiconvex on $\mathbb R^n$, we pick any distinct points $x,y\in \mathbb R^n$ and a value $t\in (0,1)$. If $0\notin \{x,\, y,\, (1-t)x+ty\}$, then by~\eqref {function6} we have $$f(x)=f_0(x),\ f(y)=f_0(y),\ f((1-t)x+ty)=f_0((1-t)x+ty).$$ Hence,~\eqref{sc_impl} implies~\eqref{sqc} with $\gamma:=2$. If $x=0$, then combining~\eqref{function6} with~\eqref{sc_impl} yields
	\begin{eqnarray*}\label{estimates_2}
	\begin{array}{rcl}
		f((1-t)x+ty) & = & f_0((1-t)x+ty)\\
		& \leq & \max\big\{f_0(x),f_0(y)\big\}-(1-t)t\|x-y\|^2\\
		& = & \max\big\{0,f(y)\big\}-(1-t)t\|x-y\|^2\\
		& = & f(y)-(1-t)t\|x-y\|^2\\
		& = & \max\big\{f(x),f(y)\big\}-(1-t)t\|x-y\|^2.		
	\end{array}
	\end{eqnarray*} Therefore,~\eqref{sqc} holds with $\gamma=2$. The case $y=0$ can be considered in a similar way. If $(1-t)x+ty=0$, then $$f((1-t)x+ty)=-\alpha<f_0(0)=f_0((1-t)x+ty).$$ Therefore, by~\eqref{sc_impl} and ~\eqref{function6} we can infer that~\eqref{sqc} holds with $\gamma=2$.
	
	We have thus proved that $f$ is strongly quasiconvex on $\mathbb R^n$ with the modulus $\gamma=2$.
$\hfill\Box$
\end{proof}

\begin{remark}{\rm The construction of the discontinuous SQC function and arguments in the proof of Theorem~\ref{Thm2} apply well to the situation where $f_0:C\to \mathbb R$ is a continuous function defined on a convex set $C\subset\mathbb R^n$, provided that ${\rm int}\, C\neq\emptyset$ and $f$ attains its global minimum on $C$ at a point $c\in {\rm int}\, C$. To be more precise, instead of~\eqref{function6}, the desired function $f:C\to \mathbb R$ is given by \begin{equation*}\label{function7}
			f(x)=\begin{cases}
				f_0(x) & {\rm for}\ x\in C\setminus \{c\}\\
				f_0(c)-\alpha & {\rm for}\ x=c,
			\end{cases}	
		\end{equation*} where $\alpha>0$ is an arbitrarily chosen constant. This function $f$ is strongly quasiconvex on $C$, continuous on the set $C\setminus\{c\}$, l.s.c. at $c$, but not u.s.c. at $c$. 
	}
\end{remark}

Looking back to Proposition~\ref{Prop3}, one may ask: \textit{Is it possible to construct a multivariable strongly quasiconvex function which is discontinuous at infinitely many points in the interior of its effective domain?} 

\medskip
Using an idea quite different from that one for proving Theorem~\ref{Thm2}, we will solve the above question in the affirmative. 

\begin{theorem}\label{Thm3} For any $n\geq 2$, there exits a strongly quasiconvex function $f:\mathbb R^n\to\mathbb R$, which is discontinuous at infinitely many points.
\end{theorem}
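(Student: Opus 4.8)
The plan is to build $f$ as a monotone radial perturbation of the squared norm, mimicking in $\mathbb R^n$ the increasing-jump construction used for the univariable Proposition~\ref{Prop3}. Concretely, I would keep $f_0(x)=\|x\|^2$ and seek $f$ of the form
$$f(x)=\|x\|^2+h(\|x\|),$$
where $h:[0,+\infty)\to\mathbb R$ is a nondecreasing step function with infinitely many upward jumps, for instance $h(r)=\sum_{k=1}^{\infty}c_k\,\mathbf 1_{[k,+\infty)}(r)$ with each $c_k>0$. For every fixed $r$ only finitely many terms are nonzero, so $h$ is finite-valued and hence $f:\mathbb R^n\to\mathbb R$ is real-valued; and since $h$ jumps at each integer, $f$ is discontinuous at every point of each sphere $\{x:\|x\|=k\}$, $k=1,2,\dots$, which already yields infinitely many points of discontinuity. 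A short separate check shows $f$ is u.s.c. everywhere but fails to be l.s.c. at these spheres.

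The heart of the argument is to verify that $f$ is strongly quasiconvex with a single modulus, $\gamma=2$. I would fix distinct $x,y\in\mathbb R^n$ and $t\in(0,1)$, write $z=(1-t)x+ty$, and assume without loss of generality $\|x\|\le\|y\|$ (the inequality~\eqref{sqc} is symmetric upon exchanging $x,y$ and replacing $t$ by $1-t$). Using the identity~\eqref{sc_f0} for $f_0=\|\cdot\|^2$, namely $\|z\|^2=(1-t)\|x\|^2+t\|y\|^2-(1-t)t\|x-y\|^2$, the target inequality $f(z)\le\max\{f(x),f(y)\}-(1-t)t\|x-y\|^2$ reduces, after cancelling the common term $-(1-t)t\|x-y\|^2$, to
$$(1-t)\|x\|^2+t\|y\|^2+h(\|z\|)\le\max\big\{\|x\|^2+h(\|x\|),\ \|y\|^2+h(\|y\|)\big\}.$$
Since both $r\mapsto r^2$ and $h$ are nondecreasing, the right-hand maximum equals $\|y\|^2+h(\|y\|)$, so it suffices to establish $h(\|z\|)-h(\|y\|)\le(1-t)(\|y\|^2-\|x\|^2)$. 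The right-hand side is nonnegative, while $\|z\|\le(1-t)\|x\|+t\|y\|\le\|y\|$ together with monotonicity of $h$ gives $h(\|z\|)-h(\|y\|)\le0$, closing the estimate.

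The conceptual obstacle — and the reason the lowering device of Theorem~\ref{Thm2} and the dense-boundary device of Theorem~\ref{Thm1} do not transfer to the whole of $\mathbb R^n$ — is that one cannot freely raise (or lower) the values of $\|\cdot\|^2$ at an \emph{arbitrary} prescribed discontinuity set: if a convex combination $z$ lands on a ``raised'' point while neither endpoint does, then~\eqref{sqc} breaks. The monotone radial perturbation circumvents this precisely because the added term increases along radii, matching the radial monotonicity of $\|\cdot\|^2$; consequently the maximum in~\eqref{sqc} is always attained at the farther endpoint $y$, and $\|z\|\le\|y\|$ ensures the perturbation at $z$ never exceeds its value at $y$. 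Once this alignment is identified, the verification above is routine, and one gains the flexibility to vary the jump radii $r_k$ and heights $c_k$ (for example taking $r_k$ to accumulate at a finite radius) in order to prescribe the discontinuity set as desired.
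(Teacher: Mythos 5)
Your proof is correct, and it follows a genuinely different route from the paper's. The paper's proof of Theorem~\ref{Thm3} uses a \emph{single} jump, $g=f_0$ on $\{x:\|x\|\le\rho\}$ and $g=f_0+\beta$ outside, and verifies \eqref{sqc} through a four-case analysis depending on where $x$, $y$, and $z_t$ lie relative to the sphere, playing $f_0$ against the auxiliary strongly convex function $h=f_0+\beta$; the infinitude of discontinuity points there comes solely from the sphere $D$ being an infinite set when $n\ge 2$. You instead isolate a clean general lemma: for any nondecreasing $h:[0,+\infty)\to\mathbb R$, the radial perturbation $f=\|\cdot\|^2+h(\|\cdot\|)$ is strongly quasiconvex with modulus $\gamma=2$. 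Your verification is essentially one line: the identity \eqref{sc_f0} cancels the term $-(1-t)t\|x-y\|^2$ on both sides; the normalization $\|x\|\le\|y\|$ is legitimate because \eqref{sqc} is invariant under $(x,y,t)\mapsto(y,x,1-t)$; joint monotonicity of $r\mapsto r^2$ and $h$ forces the maximum to be $\|y\|^2+h(\|y\|)$; and $\|z\|\le(1-t)\|x\|+t\|y\|\le\|y\|$ closes the estimate. This buys you more than the paper proves at this point: the choices $h=\beta\,\mathbf 1_{(\rho,+\infty)}$ and $h=\beta\,\mathbf 1_{[\rho,+\infty)}$ recover, respectively, the paper's function \eqref{function_n2} and the u.s.c.\ variant \eqref{function_n3} of Remark~\ref{rem4.2}, while your infinitely many jumps produce discontinuity on countably many spheres rather than on a single one (one small caveat: if you let the jump radii accumulate at a finite radius, you must require $\sum_k c_k<\infty$ to keep $h$, and hence $f$, real-valued). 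What the paper's heavier case analysis buys in exchange is independence from radiality: it adapts to perturbations that are not functions of $\|x\|$ alone, which is exactly what is needed for the mixed construction \eqref{function_n4} of Theorem~\ref{Thm4}, where the value on the sphere depends on the dense decomposition $D_0\cup D_1$ of $D$ --- a situation outside the reach of your radial lemma. Your closing diagnosis of why the raising/lowering devices of Theorems~\ref{Thm1} and~\ref{Thm2} do not transfer to arbitrary discontinuity sets in $\mathbb R^n$ is also accurate.
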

\begin{proof} Suppose that $n\geq 2$. As in the proof of Theorem~\ref{Thm2}, we 
consider the function $f_0(x)=\|x\|^2$, $x\in\mathbb R^n$. Let $\rho$ and $\beta$ be two arbitrarily chosen positive constants. Define
\begin{equation}\label{function_n2}
	g(x)=\begin{cases}
		f_0(x) & {\rm if}\ \|x\|\leq \rho\\
		f_0(x)+\beta & {\rm if}\ \|x\|>\rho.
	\end{cases}	
\end{equation} It is easy to see that the just defined function $g:\mathbb R^n\to \mathbb R$ is continuous on the set $\big\{z\in \mathbb R^n\mid \|z\|\neq\rho\big\}$ and l.s.c. at every point belonging to \begin{equation}\label{D} D:=\big\{z\in \mathbb R^n\mid \|z\|=\rho\big\}.\end{equation} It is also clear that $g$ is not u.s.c. at every point in $D$. Thus, in particular, $g$ is discontinuous at every point in $D$.

Our function $g$ is SQC on $\mathbb R^n$. Indeed, for any distinct points $x,y\in \mathbb R^n$ and any value $t\in (0,1)$, by swapping the roles of $x$ and $y$ (if necessary) we see that there are the following four cases:

 (i) $\|x\|\leq \rho$ and $\|y\|\leq \rho$;
  
 (ii)  $\|x\|>\rho$ and $\|y\|>\rho$;
 
 (iii) $\|x\|=\rho$  and $\|y\|>\rho$;
 
 (iv) $\|x\|<\rho$  and $\|y\|>\rho$.
 
 Let $h(x)=f_0(x)+\beta=\|x\|^2+\beta$ for $x\in\mathbb R^n$. Since the function $h:\mathbb R^n\to\mathbb R$ is strongly convex on $\mathbb R^n$ with the modulus $\gamma=2$, one has 
 \begin{equation}\label{sc_impl_h} h((1-t)x+ty)\leq \max\big\{h(x),h(y)\big\})-(1-t)t\|x-y\|^2
 \end{equation}
 for all $x,y\in\mathbb R^n$ and $t\in (0,1)$. To shorten some subsequent writings, put $$z_t=(1-t)x+ty.$$
 
 If the case~(i) occurs, then from~\eqref{sc_impl} and~\eqref{function_n2} we get
 \begin{equation}\label{sqc_g} g(z_t)\leq \max\big\{g(x),g(y)\big\}-(1-t)t\|x-y\|^2.
 \end{equation}
 
 In the case~(ii), if $\|z_t\|>\rho$, then combining~\eqref{function_n2} and~\eqref{sc_impl_h} yields~\eqref{sqc_g}. If $\|z_t\|\leq\rho$, then from~\eqref{sc_impl} and~\eqref{function_n2} we have
 \begin{eqnarray*}\label{estimates_3}
 	\begin{array}{rcl}
 		g(z_t) & = & f_0(x_t)\\
 		& \leq & \max\big\{f_0(x),f_0(y)\big\}-(1-t)t\|x-y\|^2\\
 		& = & \max\big\{h(x),h(y)\big\}-(1-t)t\|x-y\|^2 -\beta\\
 		& = & \max\big\{g(x),g(y)\big\}-(1-t)t\|x-y\|^2 -\beta.		
 	\end{array}
 \end{eqnarray*} This implies~\eqref{sqc_g}.
 
 If the case~(iii) takes place, then either $\|z_t\|>\rho$ or $\|z_t\|\leq\rho$. Assuming that $\|z_t\|>\rho$, from~\eqref{function_n2} and~\eqref{sc_impl_h} we can deduce that
 \begin{eqnarray*}\label{estimates_z_1}
 	\begin{array}{rcl}
 		g(z_t) & = & h(z_t)\\
 		& \leq & \max\big\{h(x),h(y)\big\}-(1-t)t\|x-y\|^2\\
 		& = & h(y)-(1-t)t\|x-y\|^2\\
 		& = & g(y)-(1-t)t\|x-y\|^2\\
 		& = & \max\big\{g(x),g(y)\big\}-(1-t)t\|x-y\|^2.		
 	\end{array}
 \end{eqnarray*} Thus, the inequality~\eqref{sqc_g} holds true. Now, supposing that $\|z_t\|\leq\rho$, combining~\eqref{function_n2} with~\eqref{sc_impl} we get 
 \begin{eqnarray*}\label{estimates_z_1a}
 \begin{array}{rcl}
 	g(z_t) & = & f_0(x_t)\\
 	& \leq & \max\big\{f_0(x),f_0(y)\big\}-(1-t)t\|x-y\|^2\\
  	& < & \max\big\{g(x),g(y)\big\}-(1-t)t\|x-y\|^2.		
 \end{array}
\end{eqnarray*} So, again the relation~\eqref{sqc_g} is valid.
 
 Finally, consider the case~(iv). Since $g(z_t)\leq h(z_t)$ by~\eqref{function_n2}, thanks to~\eqref{sc_impl_h} we have
 \begin{eqnarray*}\label{estimates_z_2}
 	\begin{array}{rcl}
 		g(z_t) & \leq & h(x_t)\\
 		& \leq & \max\big\{h(x),h(y)\big\}-(1-t)t\|x-y\|^2\\
 		& = & h(y)-(1-t)t\|x-y\|^2\\
 		& = & g(y)-(1-t)t\|x-y\|^2\\
 		& = & \max\big\{g(x),g(y)\big\}-(1-t)t\|x-y\|^2,		
 	\end{array}
 \end{eqnarray*} where the last equality is valid because $\|x\|<\rho$  and $\|y\|>\rho$.
 
 Summing up all the above, we can conclude that $g$ is strongly quasiconvex on $\mathbb R^n$ with the modulus $\gamma=2$.
 $\hfill\Box$
\end{proof}

\begin{remark}\label{rem4.2} {\rm The strongly quasiconvex function $g$ obtained in the proof of Theorem~\ref{Thm3} is l.s.c. on $\mathbb R^n$. Slightly modifying the formula of $g$, one can get a strongly quasiconvex function which is u.s.c. on the whole space~$\mathbb R^n$, but it is \textit{not l.s.c. at every point from the sphere $D$} given by~\eqref{D}. To do so, it suffices to put
		\begin{equation}\label{function_n3}
		g(x)=\begin{cases}
			f_0(x) & {\rm if}\ \|x\|<\rho\\
			f_0(x)+\beta & {\rm if}\ \|x\|\geq\rho.
		\end{cases}	
		\end{equation} To show that this function is strongly quasiconvex on $\mathbb R^n$ with the modulus $\gamma=2$, one can apply the arguments similar to those of the preceding proof. To be more precise, given any distinct points $x,y\in \mathbb R^n$ and a value $t\in (0,1)$, by swapping the roles of $x$ and $y$ (if necessary) we have the following four cases:
		
		(i') $\|x\|<\rho$ and $\|y\|<\rho$;
		
		(ii')  $\|x\|\geq\rho$ and $\|y\|\geq\rho$;
		
		(iii') $\|x\|<\rho$  and $\|y\|=\rho$;
		
		(iv') $\|x\|<\rho$  and $\|y\|>\rho$.
		
		Using~\eqref{function_n3} and the auxiliary function~$h$ introduced in the proof of Theorem~\ref{Thm3}, we can analyze the cases (i')--(iv') by the arguments quite similar to those of that proof.}
\end{remark}

Combining the constructions of the discontinuous multivariable SQC functions described by the formulas~\eqref{function_n2} and~\eqref{function_n3}, we can obtain the following result.

\begin{theorem}\label{Thm4} For any $n\geq 2$, there exits a strongly quasiconvex function $f:\mathbb R^n\to\mathbb R$, which is lower semicontinuous but not upper semicontinuous at infinitely many points, as well as upper semicontinuous but not lower semicontinuous at infinitely many points.
\end{theorem}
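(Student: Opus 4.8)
The plan is to merge the two one-sphere constructions \eqref{function_n2} and \eqref{function_n3} into a single radially symmetric function built from $f_0(x)=\|x\|^2$ by inserting \emph{two} concentric jump spheres. Fix radii $0<\rho_1<\rho_2$ and constants $\beta_1,\beta_2>0$, and define
\begin{equation*}
f(x)=\begin{cases}
 f_0(x) & \text{if } \|x\|<\rho_1,\\
 f_0(x)+\beta_1 & \text{if } \rho_1\le \|x\|\le \rho_2,\\
 f_0(x)+\beta_1+\beta_2 & \text{if } \|x\|>\rho_2.
\end{cases}
\end{equation*}
The inner sphere $D_1=\{\,\|x\|=\rho_1\,\}$ carries a jump shaped exactly like \eqref{function_n3}: the value there equals the limit from outside but exceeds the limit from inside, so $f$ is u.s.c.\ but not l.s.c.\ at every point of $D_1$. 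The outer sphere $D_2=\{\,\|x\|=\rho_2\,\}$ carries a jump shaped like \eqref{function_n2}: the value there equals the limit from inside but falls below the limit from outside, so $f$ is l.s.c.\ but not u.s.c.\ at every point of $D_2$. Since $D_1$ and $D_2$ are infinite sets, this yields both required families of discontinuities, and the semicontinuity bookkeeping at each sphere is identical to that already carried out in Theorem~\ref{Thm3} and Remark~\ref{rem4.2}.

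The remaining task is strong quasiconvexity, and here I would avoid the exhaustive case split (over the positions of $x$, $y$, and $z_t=(1-t)x+ty$ relative to the two spheres) in favor of a single unified estimate. Note first that $f$ depends only on $\|x\|$, say $f(x)=\phi(\|x\|)$ with $\phi(r)=r^2+c(r)$ and $c(r)\in\{0,\beta_1,\beta_1+\beta_2\}$ nondecreasing; in particular $\phi$ is nondecreasing. Assume without loss of generality that $\|x\|\le\|y\|$, and set $c^\star=c(\|y\|)$ and $L(u)=\|u\|^2+c^\star$. By convexity of the norm, $\|z_t\|\le(1-t)\|x\|+t\|y\|\le\|y\|$, so the jump constants satisfy $c(\|z_t\|)\le c^\star$ and $c(\|x\|)\le c^\star$. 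Consequently
\begin{equation*}
 f(z_t)=\|z_t\|^2+c(\|z_t\|)\le L(z_t),\qquad f(x)\le L(x)\le L(y)=f(y).
\end{equation*}

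Because $L$ is of the form $\|\cdot\|^2+\text{const}$, it is strongly convex with modulus $\gamma=2$ (as in Example~\ref{Ex1}), so by \eqref{sc_f0} applied to $L$ together with $L(x)\le L(y)$ one gets $L(z_t)\le L(y)-(1-t)t\|x-y\|^2$. Chaining the displayed inequalities then produces
\begin{equation*}
 f(z_t)\le L(z_t)\le L(y)-(1-t)t\|x-y\|^2=f(y)-(1-t)t\|x-y\|^2\le\max\{f(x),f(y)\}-(1-t)t\|x-y\|^2,
\end{equation*}
which is exactly \eqref{sqc} with $\gamma=2$. The only place requiring care, and the step I expect to be the main obstacle, is arranging the two jumps so as to produce \emph{opposite} one-sided failures of semicontinuity while still leaving $f$ radially nondecreasing: the monotonicity of $c(r)$ is precisely what both fixes the orientation of the discontinuities and drives the key inequality $f(z_t)\le L(z_t)$, so once the construction is pinned down the verification is immediate. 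If one prefers, the same conclusion follows by the longer case analysis of Theorem~\ref{Thm3}, now with the auxiliary functions $f_0$, $f_0+\beta_1$, and $f_0+\beta_1+\beta_2$ playing the roles of $f_0$ and $h$.
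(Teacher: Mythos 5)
Your proof is correct, and it reaches the theorem by a route genuinely different from the paper's. The paper's construction \eqref{function_n4} places both kinds of semicontinuity failure on a \emph{single} sphere $D=\{z\in\mathbb R^n\mid \|z\|=\rho\}$: it splits $D$ into the dense subsets $D_0$ (rational first coordinate, given the lower value $f_0$) and $D_1=D\setminus D_0$ (given $f_0+\beta$), so that $g$ is l.s.c.\ but not u.s.c.\ on $D_0$ and u.s.c.\ but not l.s.c.\ on $D_1$; strong quasiconvexity is then checked by a case analysis over the positions of $x$, $y$, $z_t$ relative to $\Omega_0\cup D_0$ and $\Omega_1\cup D_1$, with the auxiliary function $h=f_0+\beta$. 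You instead separate the two failure types onto two concentric spheres, which suffices because the statement only asks for infinitely many points of each type, and --- the real gain --- you replace the case analysis by a single unified estimate: writing $f(x)=\|x\|^2+c(\|x\|)$ with $c\geq 0$ nondecreasing, the normalization $\|x\|\leq\|y\|$ is legitimate since \eqref{sqc} is symmetric under swapping $x$ and $y$ together with $t\mapsto 1-t$, convexity of the norm gives $\|z_t\|\leq\|y\|$, hence $f\leq L:=\|\cdot\|^2+c(\|y\|)$ at $x$ and at $z_t$ with equality at $y$, and the identity \eqref{sc_f0} applied to $L$ yields \eqref{sqc} with $\gamma=2$ in one chain. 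This monotone-radial-majorant argument proves more than the paper's case-by-case verification: it shows at once that \emph{every} function of the form $\|x\|^2+c(\|x\|)$ with $c$ nonnegative and nondecreasing is strongly quasiconvex with modulus $2$, which subsumes the constructions of Theorem~\ref{Thm3} and Remark~\ref{rem4.2} and extends without change to countably many jump spheres. What the paper's version buys that yours does not: its two families of bad points are dense, interleaved subsets of one and the same sphere --- a non-radial function, so outside the scope of your lemma --- which is a slightly stronger phenomenon than the theorem's literal statement requires.
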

\begin{proof} Fix any $n\geq 2$. Let $f_0$, $\rho>0$, $\beta>0$, $D$, and $h$ be the same as in the proof of Theorem~\ref{Thm3}. Put $\Omega_0=B(0,\rho)$, $\Omega_1=\mathbb R^n\setminus \bar B(0,\rho)$, $$D_0=\{x=(x_1,\dots,x_n)\in D\mid x_1\in\mathbb Q\},$$ and  $D_1=D\setminus D_0$. Note that $D_0$ and $D_1$ are dense subsets of $D$ in the relative topology of $D\subset\mathbb R^n$. Consider the function 
	\begin{equation}\label{function_n4}
		g(x)=\begin{cases}
			f_0(x) & {\rm for}\ x\in\Omega_0\cup D_0\\
			f_0(x)+\beta & {\rm for}\ x\in\Omega_1\cup D_1.
		\end{cases}	
	\end{equation} It is a simple matter to verify this function~$g$ is continuous on the set~$\Omega_0\cup \Omega_1$, l.s.c. but not u.s.c. at every point of~$D_0$, while u.s.c. but not l.s.c. at every point of~$D_1$. To complete the proof, it remains to show that~$g$ is is strongly quasiconvex on~$\mathbb R^n$.
	
	Let two distinct points $x,y\in \mathbb R^n$ and a value $t\in (0,1)$ be given arbitrarily. Set $z_t=(1-t)x+ty.$
	
	If $x,y\in\Omega_0\cup D_0$, then $z_t\in\Omega_0$. Hence, in accordance with~\eqref{function_n4}, one has $g(x)=f_0(x)$, $g(y)=f_0(y)$, and $g(z_t)=f_0(z_t)$. So, using~\eqref{sc_impl} gives~\eqref{sqc_g}.
	
	If $x,y\in\Omega_1\cup D_1$, then $\|x\|\geq\rho$ and $\|y\|\geq\rho$. By swapping the roles of $x$ and $y$ (if necessary), we may assume that $\|x\|\leq \|y\|$. Hence, $h(x)\leq h(y)$. If $z_t\in\Omega_0\cup D_0$, then combining~\eqref{function_n4} with~\eqref{sc_impl_h} yields
	\begin{eqnarray}\label{estimates_z_3}
		\begin{array}{rcl}
			g(z_t) & = & f_0(z_t)\\
			& \leq & h(z_t)\\
			& \leq & \max\big\{h(x),h(y)\big\}-(1-t)t\|x-y\|^2\\
			& = & h(y)-(1-t)t\|x-y\|^2\\
			& = & g(y)-(1-t)t\|x-y\|^2\\
			& = & \max\big\{g(x),g(y)\big\}-(1-t)t\|x-y\|^2.		
		\end{array}
	\end{eqnarray} So, the inequality~\eqref{sqc_g} is valid. If  $z_t\in\Omega_1\cup D_1$, then by~\eqref{function_n4} and~\eqref{sc_impl_h} we have
		\begin{eqnarray}\label{estimates_z_4}
		\begin{array}{rcl}
			g(z_t) & = & h(z_t)\\
			& \leq & \max\big\{h(x),h(y)\big\}-(1-t)t\|x-y\|^2\\
			& = & h(y)-(1-t)t\|x-y\|^2\\
			& = & g(y)-(1-t)t\|x-y\|^2\\
			& = & \max\big\{g(x),g(y)\big\}-(1-t)t\|x-y\|^2.		
		\end{array}
	\end{eqnarray} This establishes~\eqref{sqc_g}.
	
	Finally, let us consider the situation one of the points $x$ or $y$ is contained in $\Omega_0\cup D_0$ and another point is contained in $\Omega_1\cup D_1$. By swapping the roles of $x$ and $y$ (if necessary), we may assume that  $x\in\Omega_0\cup D_0$ and $y\in \Omega_1\cup D_1$. Then we have $\|x\|\leq\rho\leq \|y\|$. This implies that $h(x)\leq h(y)$. Therefore, if $z_t\in\Omega_0\cup D_0$ then the estimates~\eqref{estimates_z_3} are valid, and if $z_t\in\Omega_1\cup D_1$ then the estimates~\eqref{estimates_z_4} hold true. So, in any case, the inequality~\eqref{sqc_g} takes place.
	
	Thus, we have shown that the function $g$ given by~\eqref{function_n4} is strongly quasiconvex on $\mathbb R^n$ with the modulus $\gamma=2$.	
$\hfill\Box$
\end{proof}

\begin{remark}{\rm A careful analysis of the last two proofs and of Remark~\ref{rem4.2} tells us that all the arguments used there remain effective if instead of the strongly convex function $f_0(x)=\|x\|^2$ we consider any continuous SQC function $f_0:\mathbb R^n\to\mathbb R$ having the property $f_0(x)\leq f_0(y)$ for all $x,y\in\mathbb R^n$ with $\|x\|\leq\|y\|$ (note that all the functions discussed in Example~\ref{Ex1a} fulfill the required conditions). Of course, the inequality~\eqref{sqc_g} then becomes \begin{equation*}\label{sqc_g_1} g(z_t)\leq \max\big\{g(x),g(y)\big\}-\frac{1}{2}\gamma(1-t)t\|x-y\|^2,
\end{equation*} if the new function $f_0$ is strongly quasiconvex on $\mathbb R^n$ with a modulus $\gamma>0$.
}
\end{remark}

\begin{remark}\label{rem4.4} {\rm
		Since one usually considers SQC functions with bounded effective domains, the following modifications of the formulas for discontinuous multivariable SQC functions used in the proof of Theorem~\ref{Thm3}, Remark~\ref{rem4.2}, and the proof of Theorem~\ref{Thm4}, is of interest. 
		
		Let $n\geq 2$. Take any bounded convex set $C\subset\mathbb R^n$ such that $({\rm int}\,C)\cap D\neq\emptyset$, where $D$ is defined by~\eqref{D}.   
		
		For the function $g$ given by~\eqref{function_n2}, the modified function $g_1:\mathbb R^n\to \overline{\mathbb R}$ is defined by setting $g_1(x)=g(x)$ for all $x\in C$ and $g_1(x)=+\infty$ for all $x\notin C$. From the proof of Theorem~\ref{Thm3} it follows that $g_1$ is strongly quasiconvex with the modulus $\gamma=2$, l.s.c. at every point in $C\cap D$, but not u.s.c. at every point in $({\rm int}\,C)\cap D$. Thus, $g_1$ is discontinuous at infinitely many points in the interior of its effective domain.
		
		Now, consider the function $g$ given by~\eqref{function_n3} and put $g_2(x)=g(x)$ for all $x\in C$ and $g_2(x)=+\infty$ for all $x\notin C$. The analysis given in Remark~\ref{rem4.2} shows that the function $g_2:\mathbb R^n\to \overline{\mathbb R}$ is strongly quasiconvex with the modulus $\gamma=2$, u.s.c. at every point in $({\rm int}\,C)\cap D$, but not l.s.c. at every point in $({\rm int}\,C)\cap D$. So, $g_2$ is discontinuous at  infinitely many points in the interior of its effective domain.
		
		Finally, to modify the function $g$ given by~\eqref{function_n4}, define $g_3(x)=g(x)$ for all $x\in C$ and $g_3(x)=+\infty$ for all $x\notin C$. From the proof of Theorem~\ref{Thm3} it is clear that the function $g_3:\mathbb R^n\to \overline{\mathbb R}$ is strongly quasiconvex with the modulus $\gamma=2$, continuous on the set~$({\rm int}\,C)\cap \big(\Omega_0\cup \Omega_1\big)$, l.s.c. but not u.s.c. at every point of~$({\rm int}\,C)\cap D_0$, while u.s.c. but not l.s.c. at every point of~$({\rm int}\,C)\cap D_1$. Therefore, $g_3$ is discontinuous at  infinitely many points in the interior of its effective domain.
		}
\end{remark}  

The next simple proposition allows us to apply invertible affine transformations to all the discontinuous SQC functions mentioned in this section to have many other discontinuous SQC functions, whose effective domains are of various shapes and located at different places in the Euclidean spaces under consideration.

\begin{proposition}\label{Prop4} Let  $C\subset\mathbb R^n$ be a convex set, $f:C\to\mathbb R$ a strongly quasiconvex function with a modulus $\gamma>0$. Suppose that $A:\mathbb R^n\to\mathbb R^n$ is an invertible linear operator and $b\in\mathbb R^n$. Then, the formula $g(z)=f\big(A^{-1}(z-b)\big)$, where $z\in A(C)+b$, gives a strongly quasiconvex function defined on the convex set $A(C)+b$.	
\end{proposition}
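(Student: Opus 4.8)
The plan is to verify the defining inequality~\eqref{sqc} for $g$ directly, transferring the corresponding inequality for $f$ through the affine change of variables $T(x):=Ax+b$, whose inverse is $T^{-1}(z)=A^{-1}(z-b)$. First I would record that the effective domain $A(C)+b=T(C)$ is convex, being the image of the convex set $C$ under the affine map $T$; this disposes of the domain requirement and legitimizes the formula for $g$.

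Next, fix arbitrary points $z_1,z_2\in A(C)+b$ and $t\in(0,1)$, and set $x_i:=A^{-1}(z_i-b)\in C$, so that $g(z_i)=f(x_i)$. The central computation is that $T^{-1}$ carries affine combinations to affine combinations: since $(1-t)+t=1$, one has $(1-t)z_1+tz_2-b=(1-t)(z_1-b)+t(z_2-b)$, and applying the linear map $A^{-1}$ gives
\begin{equation*}
	A^{-1}\big((1-t)z_1+tz_2-b\big)=(1-t)x_1+tx_2.
\end{equation*}
Consequently $g((1-t)z_1+tz_2)=f((1-t)x_1+tx_2)$, and the strong quasiconvexity of $f$ with modulus $\gamma$ yields
\begin{equation*}
	g((1-t)z_1+tz_2)\leq\max\{g(z_1),g(z_2)\}-\frac{1}{2}\gamma(1-t)t\,\|x_1-x_2\|^2.
\end{equation*}

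The last ingredient, which I expect to be the only genuine point requiring the invertibility hypothesis, is to pass from $\|x_1-x_2\|$ back to $\|z_1-z_2\|$. Here $x_1-x_2=A^{-1}(z_1-z_2)$, and invertibility of $A$ provides a lower bound $\|A^{-1}w\|\geq\|w\|/\|A\|$ for every $w\in\mathbb R^n$, where $\|A\|$ denotes the operator norm of $A$; this is seen by writing $w=A(A^{-1}w)$ and using $\|w\|\leq\|A\|\,\|A^{-1}w\|$. Thus $\|x_1-x_2\|^2\geq\|z_1-z_2\|^2/\|A\|^2$, and substituting into the previous display shows that $g$ satisfies~\eqref{sqc} on $A(C)+b$ with the modified modulus $\gamma':=\gamma/\|A\|^2>0$. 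The essential observation is that an affine map may stretch distances, so the quadratic penalty term must be rescaled; invertibility is precisely what guarantees that the rescaled modulus $\gamma'$ remains strictly positive, whence $g$ is again strongly quasiconvex.
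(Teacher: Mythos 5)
Your proof is correct and follows essentially the same route as the paper: both verify the defining inequality~\eqref{sqc} for $g$ via the affine change of variables and then bound $\|A^{-1}(z-w)\|$ from below by a positive constant times $\|z-w\|$. The paper expresses this lower bound through the minimum modulus $\beta$ of $A^{-1}$, which equals your $1/\|A\|$, so its modulus $\gamma\beta^2$ coincides with your $\gamma/\|A\|^2$.
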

\begin{proof} Consider the constant $\beta:=\min\left\{\|A^{-1}(v)\|\mid v\in\mathbb R^n,\; \|v\|=1\right\}$, which is equal to the \textit{minimum modulus} of the linear operator $A^{-1}:\mathbb R^n\to\mathbb R^n$ (see~\cite[pp.~404--405]{in2020}). Clearly, $\beta>0$ and one has $\|A^{-1}(v)\|\geq \beta\|v\|$ for all $v\in \mathbb R^n$. Hence, for every $v\in \mathbb R^n$, it holds that \begin{equation}\label{ineq_beta}
		-\|A^{-1}(v)\|^2\leq -\beta^2\|v\|^2.
	\end{equation} 
	
	Let $z,w\in A(C)+b$ and $t\in (0,1)$ be given arbitrarily. Set $x=A^{-1}(z-b)$ and $y=A^{-1}(w-b)$. Using the strong quasiconvexity of $f$ and the inequality~\eqref{ineq_beta}, we have  
	\begin{eqnarray*}\begin{array}{rcl}
				g((1-t)z+tw) & = & f\left(A^{-1}((1-t)z+tw-b)\right)\\
				 & = & f\left((1-t)A^{-1}(z-b)+tA^{-1}(w-b)\right) \\
				 & = & f((1-t)x+ty)\\
			  &\leq & \max \{f(x),f(y)\}-\frac{1}{2}\gamma (1-t)t\|x-y\|^2\\
			  & = & \max \{g(z),g(w)\}-\frac{1}{2}\gamma (1-t)t\|A^{-1}(z-b)-A^{-1}(w-b)\|^2\\
			  & \leq & \max \{g(z),g(w)\}-\frac{1}{2}\gamma (1-t)t\|A^{-1}(z-w)\|^2\\ 
			  & \leq & \max \{g(z),g(w)\}-\frac{1}{2}\gamma\beta^2 (1-t)t\|z-w\|^2.
		\end{array}	
	\end{eqnarray*} We have thus shown that the function $g$, which is defined on $A(C)+b$, is strongly quasiconvex with the modulus $\gamma\beta^2$.
	$\hfill\Box$
\end{proof}

\section{Conclusions}\label{Sect_5}

To answer in details the important open question from~\cite[p.~36]{glm2025},  several constructions for building discontinuous strongly quasiconvex functions have been given in this paper.

\medskip
The existence of real-valued strongly quasiconvex functions, which are lower semicontinuous but not upper semicontinuous at infinitely many points, as well as upper semicontinuous but not lower semicontinuous at infinitely many points has been proved.

\medskip
All the obtained discontinuous strongly quasiconvex functions come in full agreement with the classical result of Crouzeix~\cite{c_1981} saying that a real-valued quasiconvex function on $\mathbb R^n$ is almost everywhere Fr\'echet differentiable, hence almost everywhere continuous.

\medskip
The constructions and proofs of Sect.~\ref{Sect_4} can yield discontinuous strongly quasiconvex functions defined on Hilbert spaces.

\section*{Acknowledgements}

The research of Nguyen Thi Van Hang and Nguyen Dong Yen was supported by the project NCXS02.01/24-25 of Vietnam Academy of Science and Technology. The research of Felipe Lara was partially supported by Anid-Chile through Fondecyt Regular 1241040. The authors are indebted to Vietnam Institute for Advanced Study in Mathematics (VIASM) for warm hospitality during their recent research stays there.

\section*{Declarations}

\textbf{Conflict of interest} The authors have not disclosed any Conflict of   interest.

\smallskip
\noindent \textbf{Data availability statement} This manuscript has no associated data.

\end{document}